\documentclass[a4paper,11pt]{article}
\usepackage[utf8]{inputenc}
\usepackage{pstricks,pst-tree,pst-node} 
\usepackage{subcaption}
\usepackage{amsmath}
\usepackage{amsfonts}
\usepackage{amssymb}
\usepackage{amsthm}
\usepackage{mathrsfs}
\usepackage{amsthm}
\usepackage{enumitem}
\usepackage[left=3cm, right=3cm,top=3cm,bottom=3cm]{geometry}
\usepackage{graphicx}

\usepackage{pstricks}
\theoremstyle{plain}
\newtheorem{theorem}{Theorem}

\newtheorem{lemma}[theorem]{Lemma}
\newtheorem{corollary}[theorem]{Corollary}
\newtheorem{conjecture}[theorem]{Conjecture}
\newtheorem{definition}[theorem]{Definition}

\theoremstyle{definition}
\newtheorem*{ack}{Acknowledgements}
\title{Counterexamples to Thomassen's conjecture on decomposition of cubic graphs}
\author{
  Thomas Bellitto \footnote{Department of Mathematics and Computer Science, University of Southern Denmark, Odense, Denmark. \texttt{bellitto@imada.sdu.dk}}
  \and
  Tereza Klimošová \footnote{Department of Applied Mathematics, Faculty of Mathematics and Physics, Charles University, Prague, Czech Republic \texttt{tereza@kam.mff.cuni.cz}}
  \and
  Martin Merker \footnote{Department of Applied Mathematics and Computer Science, Technical University of Denmark, Lyngby, Denmark. \texttt{marmer@dtu.dk}}
  \and 
  Marcin Witkowski \footnote{Faculty of Mathematics and Computer Science, Adam Mickiewicz University, Pozn\'{a}n, Poland \texttt{mw@amu.edu.pl}}
  \and 
  Yelena Yuditsky\footnote{Department of Mathematics, Ben-Gurion University of the Negev, Be’er-Sheva, Israel. \texttt{yuditsky@bgu.ac.il}}
}
\begin{document}
 \maketitle

\begin{abstract}
We construct an infinite family of counterexamples to Thomassen's conjecture that the vertices of every 3-connected, cubic graph on at least 8 vertices can be colored blue and red such that the blue subgraph has maximum degree at most 1 and the red subgraph minimum degree at least 1 and contains no path on 4 vertices. 
\end{abstract}
 
Wegner~\cite{bib:wegner} conjectured in 1977 that the square of every planar, cubic graph is 7-colorable and this was recently proved by Thomassen~\cite{bib:thomassen} and independently by Hartke, Jahanbekam and Thomas~\cite{bib:hartke}. The general idea of Thomassen's proof is that a special 2-coloring of the vertices of a cubic graph can be used to obtain a 7-coloring of its square. In this article, we call such a 2-coloring good, which is defined as follows.

\begin{definition}
A \emph{good coloring} of a graph is a 2-coloring of its vertices in colors blue and red such that 
\begin{description}[topsep = 0pt, noitemsep]
    \item[(1)] the subgraph induced by the blue vertices has maximum degree at most 1,
    \item[(2)] the the subgraph induced by the red vertices has minimum degree at least 1, and 
    \item[(3)] the subgraph induced by the red vertices contains no path on 4 vertices.
\end{description}
\end{definition}

It is easy to prove that a minimal counterexample to Wegner's conjecture would have to be cubic and 3-connected (see Lemma 3 of~\cite{bib:hartke}) and would of course have at least 8 vertices.
Thomassen showed in~\cite{bib:thomassen} that if a 3-connected planar cubic graph has a good coloring, then its square is 7-colorable. Hence, Thomassen made the following conjecture that could lead to a substantially simpler proof of Wegner's conjecture.:

\begin{conjecture}[Thomassen]\label{conj:thomassen}
Every 3-connected, cubic graph on at least 8 vertices has a good coloring.
\end{conjecture}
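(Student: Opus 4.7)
Since the paper's goal is to refute Conjecture~\ref{conj:thomassen}, the plan is to construct an explicit infinite family $\{G_n\}$ of $3$-connected cubic graphs, each on at least $8$ vertices, and to verify that none of them admits a good coloring. Before designing the family, I would pin down the structure of good colorings: conditions (2) and (3) force every connected component of the red subgraph to have minimum degree at least~$1$ and to contain no $P_4$ as a subgraph, which leaves exactly the stars $K_{1,k}$ with $k\geq 1$ and the triangle $K_3$ (for non-star trees contain $P_4$, and any cycle of length $\geq 4$ or any graph on $\geq 4$ vertices containing a triangle plus an extra edge also contains $P_4$). Condition~(1) forces the blue subgraph to be a matching together with isolated blue vertices. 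Hence in a cubic host, each blue vertex has two or three red neighbours, each red vertex has zero, one, or two blue neighbours, and the red part decomposes into stars and triangles.

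The construction itself would proceed by identifying a small cubic gadget $H$ with a few degree-two ``ports'', and then performing a finite case analysis to determine precisely which colorings of the ports are extendable to a good coloring of any cubic graph containing $H$ attached along those ports. The target is a gadget whose admissible port patterns are so restrictive that assembling several copies of $H$ into a $3$-connected cubic graph yields a global contradiction. The simplest kind of obstruction would be a gadget that always forces all three of its ports to be blue: plugging such a gadget at every vertex of a $3$-connected cubic base graph $B$ would yield a cubic graph in which every vertex must be blue, contradicting~(1). A more flexible alternative is to arrange a parity-type constraint on the ports around a cycle of $B$, so that the construction fails whenever the cycle has the wrong length modulo some small integer. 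Either route naturally yields an infinite family, obtained by letting $B$ range through an infinite sequence of $3$-connected cubic graphs (for instance prisms $C_m\times K_2$), which immediately guarantees $|V(G_n)|\geq 8$ for all sufficiently large~$n$.

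The main obstacle is the search for the gadget $H$ and the verification of its rigidity. One must enumerate all colorings of the interior of $H$ compatible with (1)--(3), carefully allowing red stars and triangles that straddle the port boundary, and show that the resulting admissible port patterns are globally incompatible with the chosen base. The number of cases grows quickly with the size of $H$, so part of the challenge is keeping $H$ small enough for the analysis to be tractable while still rigid enough to yield a contradiction. A secondary but necessary technical point is preserving $3$-connectivity after substitution; this is arranged by choosing $H$ so that no two interior vertices separate the ports and by choosing $B$ to be $3$-connected, so that the resulting $G_n$ inherits $3$-connectivity. Once $H$ and the assembly scheme are fixed, extending to infinitely many values of $n$ is automatic and requires no further conceptual work.
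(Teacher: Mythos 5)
Your overall strategy is the right one and in fact matches the paper's: the conjecture is false, and the refutation goes via a small gadget with degree-two ports whose admissible port colorings are determined by a finite case analysis, assembled so that a parity constraint around a cycle cannot be satisfied, with an infinite family obtained by substituting the gadget into arbitrary $3$-connected cubic hosts. Your preliminary observation that the red components must be stars $K_{1,k}$ or triangles is correct and is implicitly what drives the case analysis. The ``parity-type constraint around a cycle'' variant you mention is exactly the mechanism the paper uses: the gadget $H'$ has two ports, and Lemma~\ref{lemmaH'} shows that in any good coloring either both ports are blue, or exactly one is red and that red port requires a red neighbour outside the gadget; gluing an odd number of copies of $H'$ into a cycle (three copies forming $H''$, Figure~\ref{figH''}) then admits no consistent assignment.

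The genuine gap is that you never produce the gadget, and you say so yourself (``the main obstacle is the search for the gadget $H$ and the verification of its rigidity''). That search \emph{is} the content of the result; without an explicit gadget and its case analysis there is no proof. The paper's gadget is the $8$-cycle $v_0v_1\ldots v_7$ with chords $v_2v_6$ and $v_3v_7$: Lemma~\ref{lemmaH} shows by a short case analysis that at most one of the ports $v_0,v_1$ is red, and that if one of them is red with a red neighbour inside $H$ then $v_4,v_5$ are forced blue; pairing two such copies gives $H'$ and the port dichotomy above. Note also that your first proposed obstruction --- a gadget forcing all of its ports to be blue --- is not what is achieved here and is not obviously achievable at all: the gadget that works yields an either/or constraint on its ports, not an absolute one, and the contradiction only emerges from the global parity of the assembly. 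So the proposal correctly predicts the architecture of the counterexample but leaves its essential combinatorial core unconstructed.
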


Note that the restriction to graphs on at least 8 vertices is necessary to exclude the 3-prism which does not have a good coloring. Bar\'at~\cite{bib:barat} proved that Conjecture~\ref{conj:thomassen} holds for generalized Petersen graphs. He also showed that every subcubic tree admits a good coloring. This motivated him to propose the following strengthening of Conjecture~\ref{conj:barat}.

\begin{conjecture}[Bar\'at]\label{conj:barat}
Every subcubic graph on at least 7 vertices has a good coloring.
\end{conjecture}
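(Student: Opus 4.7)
The plan is to attempt a proof by strong induction on the number of vertices $n \geq 7$. Barát has already verified the conjecture for subcubic trees, which suggests that induction on $n$, leveraging tree-like decompositions whenever available, is a natural route. The base cases would consist of all subcubic graphs on exactly $7$ vertices, which can be handled by a finite case analysis; the $3$-prism, the one graph known to fail, has only $6$ vertices and therefore does not threaten the base.

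The inductive step splits according to connectivity. First I would dispose of low-degree and disconnected situations. If $G$ has a vertex $v$ of degree at most $2$, delete $v$, apply induction to $G - v$ (possibly after small adjustments to ensure the reduced graph still has at least $7$ vertices, or handling the residual graph directly), and extend the coloring to $v$: coloring $v$ blue succeeds whenever both neighbors are red, and coloring $v$ red succeeds provided the resulting red subgraph has no new $P_4$ and no new isolated vertex. If $G$ has a bridge or a $2$-edge-cut, split $G$ along the cut, color each side by induction, and glue. To make the gluing work one almost certainly has to strengthen the inductive hypothesis: I would introduce a rooted version of good coloring in which the colors (and, for red vertices, the local red-path structure) at the endpoints of the cut are prescribed, and prove the strengthened statement jointly.

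The remaining case is that $G$ is cubic and $3$-edge-connected. Here I would invoke Thomassen's conjecture directly, since for $n \geq 8$ vertices it asserts precisely the existence of a good coloring; the single missing value $n = 7$ is ruled out by parity in cubic graphs. Combining this with the reductions above would complete the induction.

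The main obstacle is exactly this final step: the conjecture reduces, in the $3$-connected cubic case, to Thomassen's conjecture, which is the central open problem motivating the paper. A secondary obstacle lies in the reduction across $2$-edge-cuts, where the four possible boundary patterns interact with condition~(3) (no red $P_4$) in a subtle way, and where the strengthened inductive hypothesis may not be inherited cleanly through the cut. I expect both difficulties to be serious, and — as the main result of this paper reveals — the $3$-connected cubic obstacle is in fact insurmountable, so any such inductive scheme is doomed to break precisely at the step where Thomassen's conjecture is invoked.
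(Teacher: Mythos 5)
The statement you are trying to prove is false, and the paper in question is precisely the paper that disproves it: this ``statement'' is Bar\'at's conjecture (Conjecture~\ref{conj:barat}), and the main result of the paper is the construction of an infinite family of counterexamples to it (and to Conjecture~\ref{conj:thomassen}). Concretely, the paper builds a gadget $H$ (an 8-cycle with two chords), joins two copies into $H'$, and joins three copies of $H'$ along a 6-cycle into the subcubic graph $H''$; Lemmas~\ref{lemmaH} and~\ref{lemmaH'} force a parity constraint around that 6-cycle which no good coloring can satisfy, so no subcubic graph containing $H''$ has a good coloring. Since $H''$ is subcubic with far more than 7 vertices, no proof of the conjecture can exist, and your inductive scheme must fail somewhere --- not merely at the point where you invoke Thomassen's conjecture (which the same paper also refutes), but unconditionally.

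Beyond the fatal problem that the target is false, two steps of your outline are independently gappy. First, the degree-at-most-2 reduction does not always extend: if both neighbours of the deleted vertex $v$ are blue and each already has a blue neighbour, then $v$ can be neither blue (violating~(1)) nor red (violating~(2), as it would have no red neighbour), so the extension is not automatic and would itself require a strengthened hypothesis you have not supplied. Second, even setting aside its falsity, the final case rests on an open conjecture, so the argument would at best be a conditional reduction rather than a proof; and since Bar\'at's conjecture is strictly stronger than Thomassen's, the reduction direction you need is exactly the hard one. Your closing paragraph correctly senses that the 3-connected cubic case is where things break; the paper confirms this suspicion in the strongest possible way by exhibiting 3-connected cubic graphs (obtained by completing $H''$ with three edges among its degree-2 vertices) that admit no good coloring at all.
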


We construct an infinite family of counterexamples to Conjecture~\ref{conj:thomassen} which also disproves Conjecture~\ref{conj:barat}. The gadgets of our construction are defined as follows.  

\begin{definition}[$H$, $H'$, $H''$]
Let $H$ be the graph consisting of an 8-cycle $v_0v_1\ldots v_7$ with two chords $v_2v_6$ and $v_3v_7$, see Figure~\ref{figH}.\\
Let $H'$ be the graph consisting of two disjoint copies of $H$ and two edges joining the two copies as in Figure~\ref{figH'}.\\
Let $H''$ be the graph consisting of three disjoint copies of $H'$ and three edges joining the copies of $H'$ as in Figure~\ref{figH''}.
\end{definition}

\begin{figure}[!h]
\centering
\begin{minipage}{0.4\textwidth}
\includegraphics[scale = 0.8]{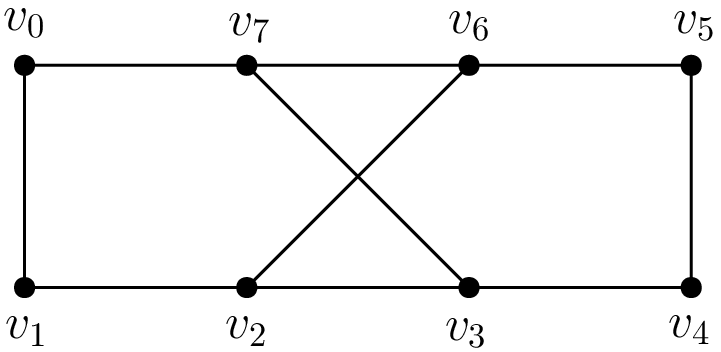}
\caption{The graph $H$}
\label{figH}
\end{minipage}
\hfill
\begin{minipage}{0.5\textwidth}
\includegraphics[scale = 0.8]{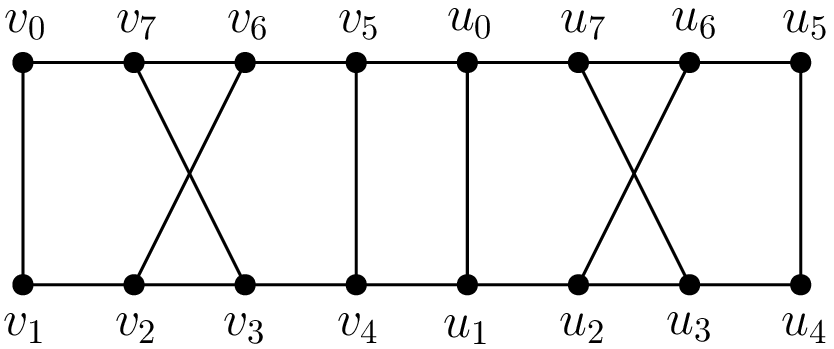}
\caption{The graph $H'$}
\label{figH'}
\end{minipage}
\end{figure}

Our goal is to show that every subcubic graph containing $H''$ as a subgraph has no good coloring. Note that if $H$ is a subgraph of a subcubic graph $G$, only the vertices that have degree 2 in $H$ can have a neighbor in $G-H$. The same applies for $H'$ and $H''$.
In the following we use the notation from Figures~\ref{figH} and~\ref{figH'} to refer to the vertices of $H$ and $H'$.

\begin{lemma}\label{lemmaH} 
If $H$ is an induced subgraph of a subcubic graph $G$, then in every good coloring of $G$
\begin{itemize}[topsep = 0pt, noitemsep]
    \item at most one of the vertices $v_0,v_1$ is colored red, and
    \item if one of $v_0,v_1$ is colored red and its neighbor in $H$ is also colored red, then both $v_4$ and $v_5$ are colored blue.
\end{itemize}
\end{lemma}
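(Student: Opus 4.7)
The plan is to run a short case analysis based on two structural facts. First, the degree-$3$ vertices $v_2, v_3, v_6, v_7$ of $H$ have all three of their $G$-neighbors inside $H$ (since $G$ is subcubic), so their colors are forced by the colors of the other vertices of $H$. Second, the chords $v_2 v_6$ and $v_3 v_7$ cut the $8$-cycle into short arcs, so any red stretch along the cycle quickly threatens a red $P_4$ through a chord -- this will be the key source of contradictions.

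For the first bullet, I would assume for contradiction that $v_0$ and $v_1$ are both red, and split on whether $v_2$ or $v_7$ is also red. If $v_2$ is red (the case $v_7$ red is symmetric via the reflection $v_0 \leftrightarrow v_1$, $v_2 \leftrightarrow v_7$, $v_3 \leftrightarrow v_6$, $v_4 \leftrightarrow v_5$ of $H$), then $v_2 v_1 v_0$ is a red $P_3$, and condition (3) forces $v_3$, $v_6$, and $v_7$ all to be blue; but then $v_7$ is a blue vertex with two blue neighbors $v_3$ and $v_6$, violating condition (1). If instead $v_2$ and $v_7$ are both blue, then condition (1) at $v_2$ (whose only red neighbor so far is $v_1$) forces at least one of $v_3, v_6$ to be red, and a short subcase analysis yields in every case either a red $P_4$ along the arc $v_3 v_4 v_5 v_6$ (when both $v_3$ and $v_6$ end up red, with red propagating to $v_4$ and $v_5$ by condition (2)) or a blue vertex among $\{v_3, v_6\}$ that picks up the two blue neighbors in $\{v_2, v_7\}$, again contradicting condition (1).

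For the second bullet, by the same reflection symmetry of $H$ I may assume $v_0, v_7$ are red, and by the first bullet $v_1$ is blue; the aim is to show $v_4, v_5$ are both blue. I would split on the color of $v_6$. The main case is $v_6$ red: then the red path $v_0 v_7 v_6$, together with the chord $v_2 v_6$ and the edge $v_5 v_6$, forces both $v_2$ and $v_5$ to be blue; the blue-degree bound at $v_2$ (whose neighbors are $v_1$ blue, $v_3$, and $v_6$ red) then forces $v_3$ to be red, and the chord $v_3 v_7$ would create the red $P_4$ $v_4 v_3 v_7 v_0$ unless $v_4$ is blue, as claimed. In the remaining case $v_6$ blue, the blue-degree bound at $v_6$ forces $v_5$ or $v_2$ to be red; using the first bullet transferred to the adjacent degree-$2$ pair $v_4, v_5$ via the automorphism $v_i \mapsto v_{i+4}$ of $H$, a short follow-up forces $v_3$ red in both subcases and produces the red $P_4$ $v_2 v_3 v_7 v_0$ through the chord $v_3 v_7$, ruling the case out. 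The main difficulty throughout is simply bookkeeping: in every subcase the contradiction comes either from a chord-borne red $P_4$ or from an overburdened blue vertex, and the only tactical question is which of the two chords to invoke.
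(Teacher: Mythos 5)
Your first bullet and your ``$v_6$ red'' case are correct and are, up to the reflection $v_i \mapsto v_{1-i}$ (indices mod $8$) that you invoke, the same argument as the paper's: the paper normalizes to $v_1,v_2$ red and splits on the colour of $v_3$, you normalize to $v_0,v_7$ red and split on $v_6$. The one place where your argument as stated does not go through is the ``$v_6$ blue'' case. You split into ``$v_2$ red'' and ``$v_5$ red'' and claim that the first bullet applied to the pair $v_4,v_5$ (via the rotation $v_i\mapsto v_{i+4}$) forces $v_3$ red in both subcases, yielding the red $P_4$ $v_2v_3v_7v_0$. In the subcase where you only know $v_5$ is red, that bullet gives you $v_4$ blue and nothing more: $v_5$ may take its red neighbour outside $H$ (it has degree $2$ in $H$), $v_4$ being blue does not constrain $v_3$, and the claimed $P_4$ $v_2v_3v_7v_0$ presupposes $v_2$ red, which you have not established there. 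The case is still a genuine contradiction, but the mechanism is different: once $v_6$ is blue, $v_2$ is adjacent to the two blue vertices $v_1$ and $v_6$, so condition (1) forces $v_2$ red outright --- no split between $v_5$ and $v_2$ and no appeal to the rotated first bullet is needed. Then condition (2) at $v_2$, whose other neighbours $v_1,v_6$ are blue, forces $v_3$ red, and $v_0v_7v_3v_2$ is the red $P_4$. With that substitution your proof is complete and coincides with the paper's (mirrored) argument.
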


\begin{proof}
By contradiction, suppose that both $v_0$ and $v_1$ are colored red in a good coloring of $G$.
If one of $v_2$ and $v_7$, say $v_2$, is also colored red, then the vertices $v_3$, $v_6$ and $v_7$ are all colored blue by (3). However, now $v_7$ is blue and has two blue neighbors, contradicting (1). Thus we may assume that both $v_2$ and $v_7$ are colored blue. By (1), both $v_3$ and $v_6$ are colored red. By (2), $v_3$ and $v_6$ each need a red neighbour, so also $v_4$ and $v_5$ are colored red. Now $v_3v_4v_5v_6$ is a red path on 4 vertices, contradicting (3).\\
To prove the second part of the lemma, we may assume that $v_0$ is colored blue and $v_1$, $v_2$ are colored red. If $v_3$ is colored blue, then $v_7$ is colored red by (1). By (2), $v_6$ is colored red. Now $v_1v_2v_6v_7$ is a red path on 4 vertices, contradicting (3). Thus we may assume that $v_3$ is colored red. By (3), both $v_7$ and $v_4$ are colored blue. By (1), $v_6$ is colored red. Finally, by (3), $v_5$ is colored blue, so both $v_4$ and $v_5$ are colored blue.
\qedhere
\end{proof}

\begin{lemma}\label{lemmaH'}
If $H'$ is an induced subgraph of a subcubic graph $G$, then in a good coloring of $G$ exactly one of the following statements is true:
\begin{itemize}[topsep = 0pt, noitemsep]
    \item both $v_0$, $v_1$ are colored blue, or
    \item $v_0$ is colored red, $v_1$ is colored blue, and $v_0$ has a red neighbor in $G-H'$, or
    \item $v_1$ is colored red, $v_0$ is colored blue, and $v_1$ has a red neighbor in $G-H'$.
\end{itemize}
\end{lemma}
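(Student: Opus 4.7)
The plan is to invoke Lemma~\ref{lemmaH} twice, once inside each of the two copies of $H$ forming $H'$. Let the second copy be $H_2$, with vertices $v_0',\dots,v_7'$; by the construction of $H'$ (Figure~\ref{figH'}), the two joining edges connect $\{v_4,v_5\}$ to $\{v_4',v_5'\}$, so that the degree-$2$ vertices of $H'$ are precisely $v_0, v_1, v_0', v_1'$. The first part of Lemma~\ref{lemmaH} applied to the first copy of $H$ already rules out that $v_0$ and $v_1$ are both red, so the three listed options are exactly the three remaining colorings of the pair $(v_0,v_1)$; they are manifestly pairwise incompatible, so what is left is to show that in the two asymmetric cases the unique red vertex among $v_0, v_1$ is forced to have a red neighbor in $G-H'$.

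By symmetry, assume $v_0$ is red and $v_1$ is blue. Property~(2) demands a red neighbor of $v_0$, and the only candidates are $v_7$ and the neighbor of $v_0$ in $G-H'$ (if it exists). The core step is to exclude $v_7$ being red. If $v_7$ were red, the second part of Lemma~\ref{lemmaH} applied to the first copy would force both $v_4$ and $v_5$ to be blue; since $v_4v_5$ is an edge and property~(1) bounds blue-degree by $1$, the remaining neighbors of $v_4$ and $v_5$---in particular $v_4'$ and $v_5'$---must all be red. But then both $v_4'$ and $v_5'$ are red in $H_2$, contradicting Lemma~\ref{lemmaH} applied to $H_2$ via the automorphism $v_i\mapsto v_{i+4}$ of $H$ (which swaps the distinguished pairs $\{v_0,v_1\}$ and $\{v_4,v_5\}$).

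Thus $v_7$ must be blue, and condition~(2) then forces the red neighbor of $v_0$ to lie in $G-H'$, establishing the second bullet. The symmetric argument with the roles of $v_0$ and $v_1$ exchanged yields the third bullet. The only step requiring any bookkeeping is the $v_7$-red elimination, where the blue constraints on $v_4,v_5$ have to be propagated across the joining edges to invoke Lemma~\ref{lemmaH} in the other copy; the remainder is a direct use of the three conditions defining a good coloring.
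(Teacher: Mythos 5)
Your proof is correct and takes essentially the same route as the paper's: rule out the double-red case by the first part of Lemma~\ref{lemmaH}, then in the asymmetric case show that a red $v_7$ would force $v_4,v_5$ blue by the second part of Lemma~\ref{lemmaH}, hence force both attachment vertices of the second copy red across the joining edges, contradicting Lemma~\ref{lemmaH} applied there. The only (cosmetic) difference is labeling: the paper names the second copy's attachment vertices $u_0,u_1$ so that Lemma~\ref{lemmaH} applies verbatim, whereas you attach at $v_4',v_5'$ and pass through the automorphism $v_i\mapsto v_{i+4}$.
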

\begin{proof}
We may assume that not both $v_0$ and $v_1$ are colored blue. By Lemma~\ref{lemmaH} not both $v_0$ and $v_1$ can be colored red. Thus, we may assume that $v_0$ is red and $v_1$ is blue. Suppose for a contradiction that $v_0$ has no red neighbor in $G-H'$. By (2) and Lemma~\ref{lemmaH}, both $v_4$ and $v_5$ are colored blue. Thus, by (1), both $u_0$ and $u_1$ are colored red. However, the vertices $u_0u_1\ldots u_7$ induce a copy of $H$, so by Lemma~\ref{lemmaH} at most one of $u_0$, $u_1$ can be red.
\end{proof}

\begin{figure}[!h]
\centering
\includegraphics[scale = 0.5]{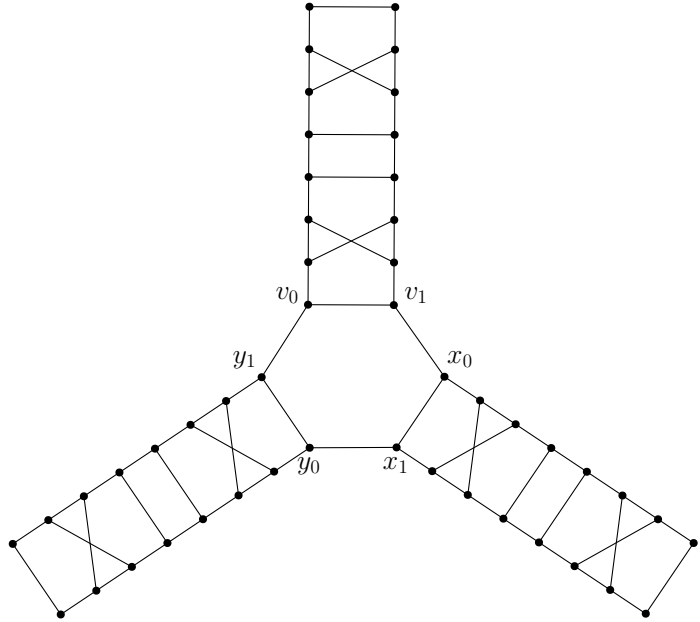}
\caption{The graph $H''$}
\label{figH''}
\end{figure}

\begin{theorem}
If a subcubic graph $G$ contains $H''$ as a subgraph, then $G$ has no good coloring.
\end{theorem}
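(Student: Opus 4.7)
The plan is to assume for contradiction that $G$ admits a good coloring and then apply Lemma~\ref{lemmaH'} independently to each of the three copies of $H'$ contained in $H''$. Before invoking the lemma, two preparatory observations are needed. First, each non-outlet vertex of a copy of $H'$ already has degree $3$ in $H'$, and the two outlets $v_0,v_1$ of each copy have their remaining incidence taken up by a joining edge of $H''$; since $G$ is subcubic, no edge of $G$ with both endpoints in a single copy of $H'$ can lie outside $H'$ itself, so every copy of $H'$ is induced in $G$. Second, since each copy contributes both of its outlets to the three joining edges of $H''$, the auxiliary graph on the three copies whose edges are the joining edges is $2$-regular and loopless, hence a triangle: any two copies are linked by exactly one joining edge.

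The main step is then to read off two constraints on each joining edge $xy$ from Lemma~\ref{lemmaH'}. (i) If $x$ is red, then by the lemma $x$ needs a red neighbor outside its copy, and this can only be $y$, so $y$ is red as well. (ii) If the copy of $x$ is in the case of Lemma~\ref{lemmaH'} where both of its outlets are blue, then $x$ has the other outlet as a blue neighbor in $H'$, and condition~(1) of a good coloring forces the external neighbor $y$ of $x$ to be red. From (i), the red outlets come in pairs along joining edges, so if $r_i\in\{0,1\}$ denotes the number of red outlets of copy $i$ (the upper bound also coming from Lemma~\ref{lemmaH'}), the sum $r_1+r_2+r_3$ is even; hence it equals $0$ or $2$.

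To finish, both remaining cases are ruled out using (ii). When $r_1+r_2+r_3=0$, every outlet is blue and every copy is in the BB case, so (ii) immediately produces a blue outlet with a blue external neighbor, a contradiction. When $r_1+r_2+r_3=2$, the two red outlets must share a common joining edge (say between copies $A$ and $B$) while the third copy $C$ is in the BB case; the two joining edges incident to $C$ end at the non-red, hence blue, outlets of $A$ and $B$, again contradicting (ii). I expect the only real obstacle to be the structural bookkeeping in the first paragraph; once each copy of $H'$ is known to be induced in $G$ and the three joining edges are known to triangulate the copies, the parity argument together with the two-case analysis closes the proof in a few lines.
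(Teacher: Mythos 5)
Your argument is correct and runs on the same engine as the paper's proof: apply Lemma~\ref{lemmaH'} to each of the three copies of $H'$ and derive a contradiction on the $6$-cycle formed by the six outlets and the three joining edges. The only real difference is in the finish: the paper assumes some outlet of that cycle is red and propagates colours around it, whereas you count red outlets and use parity. The two are logically equivalent here, but your packaging is slightly more symmetric and makes the paper's closing remark (that gluing any odd number of copies of $H'$ into a cycle also works) essentially automatic, since red outlets pairing up along joining edges forces a perfect matching on an odd cycle of copies.

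One slip in your structural bookkeeping: $H'$ does not have exactly two vertices of degree less than $3$. It has four, namely $v_0,v_1$ in one copy of $H$ and $u_4,u_5$ in the other (the paper itself notes that $H''$ has six vertices of degree $2$, i.e.\ two per copy of $H'$ even after the joining edges are added). So your premise that every non-outlet vertex of a copy of $H'$ already has degree $3$ in $H'$, and hence that every vertex of a copy has degree $3$ in $H''$, is false. The conclusion you need --- that each copy of $H'$ is induced in $G$ --- still holds, because the only degree-deficient pair inside a copy is $u_4,u_5$ and these are already adjacent in $H$, so a simple subcubic $G$ cannot add any edge with both ends in one copy; but that extra observation is needed to repair the justification. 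The $2$-regularity of your auxiliary graph on the three copies is unaffected, since the joining edges of $H''$ do use exactly the outlets $v_0,v_1$ of each copy.
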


\begin{proof}
Let $C = v_0v_1x_0x_1y_0y_1$ denote the cycle of length 6 in $H''$ which intersects all three copies of $H'$, see Figure~\ref{figH''}. Suppose for a contradiction that $G$ has a good coloring. By (1), not all vertices of $C$ are colored blue. By symmetry, we may assume that $v_0$ is colored red. By Lemma~\ref{lemmaH'}, $v_1$ is colored blue and $y_1$ is colored red. Now $y_0$ is colored blue by Lemma~\ref{lemmaH'}. By (1), not both $x_0$ and $x_1$ can be coloured blue. By symmetry, we may assume that $x_0$ is colored red. By Lemma~\ref{lemmaH'} the neighbor of $x_0$ in $G-H'$ is colored red, but $v_1$ is colored blue, a contradiction.
\end{proof}

Note that construction of $H''$ can be easily generalized. An analogous argument yields that any graph formed by gluing odd number of copies of $H'$ into a cycle as in $H''$ cannot appear as a subgraph of a subcubic graph with a good coloring.

The smallest 3-connected cubic graph containing $H''$ can be obtained from $H''$ by adding three edges joining the vertices of degree 2. However, there are many ways how to construct 3-connected cubic graphs containing $H''$ as a subgraph. For example, let $G$ be any 3-connected cubic graph containing an induced 6-cycle $C$. Since $H''$ contains precisely six vertices of degree 2, it is possible to replace $C$ by a copy of $H''$ so that the resulting graph is again 3-connected and cubic, which implies the following.

\begin{corollary}
There is an infinite family of 3-connected cubic graphs having no good coloring.
\end{corollary}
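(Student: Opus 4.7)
\emph{Plan.} By the Theorem, it suffices to exhibit infinitely many pairwise non-isomorphic 3-connected cubic graphs that contain $H''$ as a subgraph; the corollary then follows by applying the Theorem to each one. I would build this family starting from one concrete base graph and then enlarge it by a standard cubicity-preserving expansion.

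\emph{Base graph.} As noted in the paragraph preceding the corollary, a 3-connected cubic graph $G_0$ containing $H''$ is obtained by adding to $H''$ three edges that pair up its six degree-2 vertices in a way that restores cubicity, for instance by joining the three pairs of ``opposite'' vertices on the outer hexagon $C = v_0v_1x_0x_1y_0y_1$. Cubicity is immediate, and 3-connectedness can be verified by a finite case analysis: any candidate 2-vertex cut either lies entirely inside a single copy of $H$ (ruled out because $H$ is internally connected enough between its two attachment vertices) or separates two of the three copies of $H'$ in $H''$ (ruled out because the three copies of $H'$ together with the three added edges form a triangle-like backbone offering three internally vertex-disjoint paths between any two copies).

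\emph{Expansion and conclusion.} From $G_k$ (already 3-connected and cubic, containing $H''$), I would form $G_{k+1}$ by choosing two disjoint edges $a_1b_1$ and $a_2b_2$ lying outside the fixed copy of $H''$, subdividing each once with new vertices $w_1, w_2$, and adding the edge $w_1w_2$. The resulting graph has two extra vertices, is again cubic, still contains $H''$, and is again 3-connected: this is the classical operation showing that arbitrarily large 3-connected cubic graphs can be generated from small ones, and its correctness follows from a short Menger-style argument using the 3-connectedness of $G_k$. Iterating yields an infinite strictly increasing sequence $G_0, G_1, G_2, \ldots$ of 3-connected cubic graphs each containing $H''$; by the Theorem, none of them admits a good coloring. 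The main obstacle in a detailed write-up is the 3-connectedness verification, both for the base case $G_0$ and along the expansion step; the rest is essentially bookkeeping.
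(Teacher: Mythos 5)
Your proposal is correct in outline but takes a genuinely different route from the paper. The paper likewise starts from the remark that adding three edges among the six degree-2 vertices of $H''$ gives one 3-connected cubic example, but it obtains infinitely many by a substitution argument: take \emph{any} 3-connected cubic graph containing an induced 6-cycle $C$ and replace $C$ by a copy of $H''$, letting the six degree-2 vertices of $H''$ inherit the six edges that left $C$; since there are infinitely many such host graphs, the Theorem finishes the proof. Your alternative is to grow a single base example by the classical handle operation (subdivide two disjoint edges outside the fixed copy of $H''$ and join the two subdivision vertices), which preserves cubicity and 3-connectedness and strictly increases the order, so it also yields infinitely many pairwise non-isomorphic examples. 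Both routes leave the 3-connectedness verifications (of the base graph, and of the replacement/expansion step) at the level of a routine Menger-type case check, so neither is more rigorous than the other; the paper's version has the mild advantage of showing that $H''$ can be planted into a rich variety of hosts, while yours is more self-contained.

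One concrete slip you should fix: the six degree-2 vertices of $H''$ are \emph{not} the vertices of the hexagon $C = v_0v_1x_0x_1y_0y_1$. Those six vertices already have degree 3 in $H''$ (each lies on $C$ and on the 8-cycle of its copy of $H$), so joining ``opposite'' hexagon vertices would create vertices of degree 4. The actual degree-2 vertices of $H''$ are the two remaining free vertices in the second copy of $H$ inside each copy of $H'$ (the vertices playing the role of $u_4,u_5$ in the notation of Lemma~\ref{lemmaH'}); your three added edges must pair these up, and the pairs should straddle different copies of $H'$ since $u_4$ and $u_5$ within one copy are already adjacent. Relatedly, $H$ has four attachment vertices ($v_0,v_1,v_4,v_5$), not two, which slightly changes the case analysis you sketch for ruling out 2-cuts inside a copy of $H$. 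Neither issue affects the viability of your overall plan.
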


Finally, let us note that $H''$ is a 2-connected planar graph. Using $H''$, it is easy to construct an infinite family of 2-connected cubic planar graphs admitting no good coloring. We do not know if the 3-prism is the only 3-connected cubic planar graph admitting no good coloring.

\begin{ack} The research was initiated at the Structural graph theory workshop at Gułtowy,
24-28 June 2019 sponsored by ERC Starting Grant "CUTACOMBS
Cuts and decompositions: algorithms and combinatorial properties", grant agreement No 714704. The first author is supported by the Danish research council under grant number DFF-7014-00037B. The second author was supported by the grant no. 19-04113Y of the Czech Science Foundation (GAČR) and the Center for Foundations of Modern Computer Science (Charles Univ. project UNCE/SCI/004). The third author was supported by the Danish Council for Independent Research, Natural Sciences, grant DFF-8021-00249, AlgoGraph. 
\end{ack}

\bibliographystyle{abbrv}
\bibliography{bibliography}

\end{document}